\newtheorem{thm}{Theorem}[section]
\newtheorem{lem}[thm]{Lemma}
\theoremstyle{definition}
\newtheorem{pro}[thm]{Proposition}
\numberwithin{equation}{section}
\newcommand{\bd}{{\rm bd}}
\newcommand{\diam}{{\rm diam}}
\theoremstyle{remark}
\begin{document}

\baselineskip=17pt

\author{Marek LASSAK}
\author{Micha\l \ MUSIELAK}

\title[DIAMETER OF REDUCED SPHERICAL CONVEX BODIES]{DIAMETER OF REDUCED SPHERICAL CONVEX BODIES}

\address{Marek Lassak \newline Instytut Matematyki i Fizyki \newline University of Science and Technology\newline
85-796 Bydgoszcz, Poland}
\email{marek.lassak@utp.edu.pl}
\address{Micha\l Musielak \newline Instytut Matematyki i Fizyki \newline University of Science and Technology\newline
85-796 Bydgoszcz, Poland}
\email{michal.musielak@utp.edu.pl}

\date{}

\subjclass[2010]{52A55}

\keywords{spherical convex body, spherical geometry, hemisphere, lune, width, constant width, thickness, diameter}  
\maketitle

\begin{abstract}The intersection $L$ of two different non-opposite hemispheres of the unit sphere $S^2$ is called a lune.  
By $\Delta (L)$ we denote the distance of the centers of the semicircles bounding $L$. 
By the thickness $\Delta (C)$ of a convex body $C \subset S^2$ we mean the minimal value of $\Delta (L)$ over all lunes $L \supset C$. 
We call a convex body $R\subset S^2$ reduced provided $\Delta (Z) < \Delta (R)$ for every convex body $Z$ being a proper subset of $R$. 
Our aim is to estimate the diameter of $R$, where $\Delta (R) < \frac{\pi}{2}$, in terms of its thickness.\end{abstract}

\section{Introduction}

\noindent
Let $S^2$ be the unit sphere of the $3$-dimensional Euclidean space $E^3$. 
A {\it great circle} of $S^2$ is the intersection of $S^2$ with any two-dimensional linear subspace of $E^{3}$. 
By a pair of {\it antipodes} of $S^2$ we mean any pair of points being the intersection of $S^2$ with a one-dimensional subspace of $E^3$.
Observe that if two different points $a, b \in S^2$ are not antipodes, there is exactly one great circle passing through them.
By the {\it arc} $ab$ connecting them we understand the shorter part of the great circle through $a$ and $b$. 
By the {\it distance} $|ab|$ of $a$ and $b$ we mean the length of the arc $ab$. 
The notion of the {\it diameter} of any set $A \subset S^2$ not containing antipodes is taken with respect to this distance and denoted by ${\rm diam}(A)$.

A subset of $S^2$ is called {\it convex} if it does not contain any pair of antipodes of $S^2$ and if together with every two points it contains the arc connecting them.
A closed convex set $C \subset S^2$ with non-empty interior is called a convex body.
Its boundary is denoted by $\bd (C)$.
If in $\bd (C)$ there is no arc, we say that the body is {\it strictly convex}.  
Convexity on $S^2$ is considered in very many papers and monographs.
For instance in \cite{BS}, \cite{DGK}, \cite{FIN}, \cite{GHS}, \cite{Ha} and \cite{VB}.

The set of points of $S^2$ in the distance at most $\rho$, where $0 < \rho \leq \frac{\pi}{2}$, from a point $c \in S^2$ is called a {\it spherical disk}, or shorter {\it a disk}, of {\it radius} $\rho$ and {\it center} $c$.  
Disks of radius $\frac{\pi}{2}$ are called {\it hemispheres}.
Two hemispheres whose centers are antipodes are called {\it opposite hemispheres}.
The set of points of a great circle of $S^2$ which are at distance at most $\frac{\pi}{2}$ from a fixed point $p$ of this great circle is called a {\it semicircle}.
We call $p$ the {\it center} of this semicircle.

Let $C\subset S^2$ be a convex body and $p \in {\rm bd} (C)$. 
We say that a hemisphere $K$ {\it supports $C$ at} $p$ provided $C \subset K$ and $p$ is in the great circle bounding $K$.

Since the intersection of every family of convex sets is also convex, for every set $A \subset S^2$ contained in an open hemisphere of $S^2$ there is the smallest convex set ${\rm conv} (A)$ containing $A$. 
We call it {\it the convex hull of} $A$. 

If non-opposite hemispheres $G$ and $H$ are different, then $L = G \cap H$ is called a {\it lune}. 
The semicircles bounding $L$ and contained in $G$ and $H$, respectively, are denoted by $G/H$ and $H/G$. 
The {\it thickness} $\Delta (L)$ of $L$ is defined as the distance of the centers of $G/H$ and $H/G$.  

After \cite{L2} recall a few notions.
For any hemisphere $K$ supporting a convex body $C\subset S^2$ we find a hemisphere $K^*$ supporting $C$ such that the lune $K\cap K^*$ is of the minimum thickness (by compactness arguments at least one such a hemisphere $K^*$ exists).
The thickness of the lune $K \cap K^*$ is called {\it the width of $C$ determined by} $K$ and it is denoted by ${\rm width}_K (C)$.
By the {\it thickness} $\Delta (C)$ of a convex body $C \subset S^2$ we understand the minimum of ${\rm width}_K (C)$ over all supporting hemispheres $K$ of $C$.
We say that a convex body $R \subset S^2$ is {\it reduced} if for every convex body $Z \subset R$ different from $R$ we have $\Delta (Z) < \Delta (R)$. 
This definition is analogous to the definition of a reduced body in normed spaces (for a survey of results on reduced bodies see
\cite{LM}). 
If for all hemispheres $K$ supporting $C$ the numbers ${\rm width}_K (C)$ are equal, we say that $C$ is {\it of constant width}.
Spherical bodies of constant width are discussed in \cite {LaMu2} and applied in \cite{HN}.

Just bodies of constant width, and in particular disks, are simple examples of reduced bodies on $S^2$. 
Also each of the four parts of a 
disk dissected by two orthogonal great circles through the center of this disk is a reduced body.
It is called a {\it quarter of a disk}.
There is a wide class of reduced odd-gons on $S^2$ (see \cite{L3}).
In particular, the regular odd-gons of thickness at most $\frac{\pi}{2}$ are reduced.

\section{Two lemmas} 

\begin{lem} \label{max}
Let $L \subset S^2$ be a lune of thickness at most $\frac{\pi}{2}$ whose bounding semicircles are $Q$ and $Q'$.
For every $u, v, z$ in $Q$ such that $v \in uz$ and for every $q \in L$ we have $|qv| \leq \max \{|qu|, |qz| \}$.
\end{lem}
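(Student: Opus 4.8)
The plan is to reduce the statement to the elementary fact that, along a great circle, the distance to a fixed point is a unimodal function with a single minimum at the nearest point. Let $E$ denote the great circle carrying $Q$, let $f\in E$ be the point of $E$ nearest to $q$, and put $h=|qf|$. For any $x\in E$ the geodesic $qf$ meets $E$ orthogonally at $f$, so the right spherical triangle $qfx$ gives the Pythagorean relation $\cos|qx|=\cos h\cos|fx|$, where $|fx|$ is the arc-distance measured along $E$. Since $f$ is a nearest point we have $h\le\frac{\pi}{2}$, hence $\cos h\ge 0$, and therefore $|qx|$ is a nondecreasing function of $|fx|\in[0,\pi]$: the distance grows as $x$ recedes from $f$ along $E$ in either direction.

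The decisive step is to show that the nearest point $f$ actually lies on the semicircle $Q$, and not on the complementary half of $E$; this is exactly where the hypothesis $\Delta(L)\le\frac{\pi}{2}$ enters. I would fix coordinates on $S^2$ so that $E$ is the great circle bounding the hemisphere $G\supseteq L$, so that $Q=E\cap H$ is the half of $E$ whose midpoint has longitude $0$, and so that $L$ is the set of points with longitude in $[0,\Delta(L)]$. A point $q\in L$ then has longitude $\lambda\in[0,\Delta(L)]\subseteq[0,\frac{\pi}{2}]$, so its orthogonal projection onto the plane of $E$, and hence $f$, has nonnegative first coordinate; that is, $f$ lies in the closed semicircle $Q$. (The degenerate situations $q\in E$, where $f=q\in L\cap E=Q$, and $h=\frac{\pi}{2}$, where $q$ is the pole of $G$ and all three distances equal $\frac{\pi}{2}$, are immediate.) I expect this to be the main obstacle: if $\Delta(L)$ were allowed to exceed $\frac{\pi}{2}$, then $q$ could project onto the complementary semicircle, giving $f\notin Q$; the distance on $Q$ would then have an interior \emph{maximum} at the antipode of $f$, and the asserted inequality would break down.

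With $f\in Q$ established, I would finish by parametrizing $E$ by signed arc-length $\phi$ measured from $f$, writing $\phi_u,\phi_v,\phi_z$ for the parameters of $u,v,z$. Because $u,z\in Q$ and $f\in Q$, the sub-arc $uz$ corresponds to the parameter interval between $\phi_u$ and $\phi_z$, so $v\in uz$ forces $\phi_v$ to lie between $\phi_u$ and $\phi_z$. Convexity of the absolute value then yields $|\phi_v|\le\max\{|\phi_u|,|\phi_z|\}$, and since all three arc-distances lie in $[0,\pi]$, the monotonicity from the first paragraph gives the claim. Concretely, if say $|\phi_z|=\max\{|\phi_u|,|\phi_z|\}$, then $\cos\phi_v\ge\cos\phi_z$, whence $\cos|qv|=\cos h\cos\phi_v\ge\cos h\cos\phi_z=\cos|qz|$ and thus $|qv|\le|qz|\le\max\{|qu|,|qz|\}$, completing the argument.
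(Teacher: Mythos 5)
Correct, and essentially the paper's own argument: both proofs rest on the unimodality of $x\mapsto|qx|$ along the great circle carrying $Q$, with minimum at the nearest point, so that $|qx|$ grows monotonically with the arc-distance from that point. You merely make explicit what the paper compresses into a single ``Observe that'' sentence --- in particular your coordinate verification that $\Delta(L)\le\frac{\pi}{2}$ forces the foot $f$ of the perpendicular to lie on $Q$ itself (the paper takes the nearest point of the semicircle $Q$ and tacitly uses this fact), and your degenerate case $h=\frac{\pi}{2}$ is exactly the paper's special case of $q$ being the center of $Q'$ when $\Delta(L)=\frac{\pi}{2}$.
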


\begin{proof}
If $\Delta(L)= \frac{\pi}{2}$ and $q$ is the center of $Q'$, then the distance between $q$ and any point of $Q$ is the same, and thus the assertion is obvious.
Consider the opposite case when $\Delta(L) < \frac{\pi}{2}$, or $\Delta(L)= \frac{\pi}{2}$ but $q$ is not the center of $Q'$.
Clearly, the closest point $p \in Q$ to $q$ is unique.
Observe that for $x \in Q$ the distance $|qx|$ increases as the distance $|px|$ increases. 
This easily implies the assertion of our lemma. 
\end{proof}

In a standard way, an extreme point of a convex body $C \subset S^2$ is defined as a point for which the set $C \setminus \{e\}$ is convex (see \cite{L2}). 
The set of extreme points of $C$ is denoted by $E(C)$.

\begin{lem} \label{diamE}
For every convex body $C \subset S^2$ of diameter at most $\frac{\pi}{2}$ we have ${\rm diam} (E(C)) = {\rm diam}(C)$.
\end{lem}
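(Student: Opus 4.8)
The inclusion $E(C)\subseteq C$ gives $\diam(E(C))\le\diam(C)$ for free, so the whole content is the reverse inequality. By compactness there are $a,b\in C$ with $|ab|=\diam(C)=:d$, and $d\le\frac{\pi}{2}$ by hypothesis. The plan is to show that a diametral pair can be chosen among extreme points, by replacing one endpoint at a time with an extreme point that is no closer to the other. Concretely, I would prove the auxiliary claim: \emph{for every fixed $q\in C$ the function $x\mapsto|qx|$ attains its maximum over $C$ at some point of $E(C)$}. Granting this, I apply it first with $q=b$ to get $a'\in E(C)$ with $|ba'|=\max_{x\in C}|bx|\ge|ba|=d$, hence $|ba'|=d$; then I apply it with $q=a'$ to get $b'\in E(C)$ with $|a'b'|\ge|a'b|=d$, hence $|a'b'|=d$. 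Since $a',b'\in E(C)$, this yields $\diam(E(C))\ge d=\diam(C)$, finishing the argument.

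The first ingredient of the claim is that $x\mapsto|qx|$ is quasiconvex along arcs of $C$, i.e.\ $|qv|\le\max\{|qu|,|qz|\}$ whenever $u,z\in C$ and $v\in uz$. This is exactly the inequality of Lemma \ref{max}, applied after enclosing $q,u,z$ in a lune of thickness at most $\frac{\pi}{2}$ whose bounding semicircle carries the arc $uz$; such a lune exists precisely because $\diam(C)\le\frac{\pi}{2}$ forces every distance occurring here to be at most $\frac{\pi}{2}$. (Equivalently, for $r\le\frac{\pi}{2}$ the sublevel set $\{x\in C:|qx|\le r\}$ is the intersection of $C$ with a disk of radius $\le\frac{\pi}{2}$, hence convex, which is quasiconvexity of $|q\cdot|$ on $C$; note $\max_{C}|q\cdot|\le d\le\frac{\pi}{2}$, so only such radii arise.)

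The second ingredient is the passage from quasiconvexity to attainment at an extreme point. Starting from any maximizer $x_0$ of $|q\cdot|$, I take a great circle through $x_0$; it meets $C$ in an arc with both endpoints on $\bd(C)$, and the arc inequality above forces at least one endpoint to be a maximizer, so the maximum is already attained on $\bd(C)$. If that boundary maximizer is not extreme, it lies in the relative interior of a maximal boundary arc (an edge) of $C$, whose two endpoints are extreme points; applying the arc inequality once more along this edge hands the maximum to an extreme endpoint. (Alternatively one may invoke the spherical Krein--Milman theorem from \cite{L2}, namely $C=\operatorname{conv}(E(C))$, together with the general fact that a continuous quasiconvex function on a compact convex set attains its maximum at an extreme point.) I expect this second ingredient to be the main obstacle: it requires controlling the edge/extreme-point structure of a spherically convex body and checking that Lemma \ref{max} genuinely applies in the needed generality, both of which rest essentially on the bound $\diam(C)\le\frac{\pi}{2}$.
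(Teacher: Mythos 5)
Your proposal is correct, and its skeleton coincides with the paper's: both arguments reduce the problem to the quasiconvexity inequality $|qv|\leq\max\{|qu|,|qz|\}$ along arcs, and both handle a non-extreme boundary point by placing it inside a boundary arc $ef$ with extreme endpoints and handing the maximum to $e$ or $f$ (the paper asserts this edge structure without proof, exactly as you do, so you are not below its standard there). The genuine difference lies in how the key inequality is justified. The paper applies Lemma \ref{max} to the lune $K\cap K^*$, where $K$ supports $C$ along the edge $ef$, and must invoke Theorem 3 of \cite{L2} (${\rm width}_K(C)\leq\diam(C)$) to know that this lune has thickness at most $\frac{\pi}{2}$; your parenthetical sublevel-set argument --- that $\{x: |qx|\leq r\}$ with $r\leq\frac{\pi}{2}$ meets $C$ in a convex set, because all distances from $q$ into $C$ are at most $\diam(C)\leq\frac{\pi}{2}$ --- is more elementary, bypasses both Lemma \ref{max} and the width machinery of \cite{L2}, and applies to arbitrary $u,z\in C$ rather than only to edges with a common supporting hemisphere. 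What the paper's route buys is economy within its own framework (Lemma \ref{max} is already on hand); what yours buys is self-containedness and generality, at the cost of your extra interior-to-boundary reduction step, which the paper shortcuts via $\diam(C)=\diam(\bd(C))$. Two small caveats: your first suggested route, enclosing $q$ and $uz$ in a lune of thickness at most $\frac{\pi}{2}$, is asserted rather than proved (it does work --- project $q$ orthogonally onto the great circle of $uz$ and take the lune of angle $\frac{\pi}{2}$ with that projection as the center of its semicircle --- but the verification that $uz$ lies in the resulting semicircle needs the right-triangle identity $\cos|qx|=\cos|qh_0|\cos|h_0x|$); and for $r=\frac{\pi}{2}$ the ``disk'' is a hemisphere, which is not convex under the paper's definition since it contains antipodes, though the arc-containment property you actually need still holds, so the intersection with $C$ is indeed convex.
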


\begin{proof}
Clearly, ${\rm diam} (E(C)) \leq {\rm diam}(C)$.

In order to show the opposite inequality $\diam (C) \leq \diam (E(C))$, thanks to $\diam (C) = \diam (\bd(C))$, it is sufficient to show that $|cd| \leq \diam (E(C))$ for every $c, d \in \bd(C)$. 
If $c, d \in E(C)$, this is trivial.
In the opposite case, at least one of these points does not belong to $E(C)$.
If, say $d \notin E(C)$, then having in mind that $C$ is a convex body and $d \in \bd(C)$ we see that there are $e,f \in E(C)$ different from $d$ such that $d \in ef$. 
From $E(C) \subset \bd (C)$, we see that $e,f \in \bd (C)$.
Since also $d \in \bd (C)$, the arc $ef$ is a subset of $\bd (C)$. 

Recall that by Theorem 3 of \cite{L2} we have ${\rm width}_K (C) \leq {\rm diam} (C)$ for every hemisphere $K$ supporting $C$. 
In particular, for the hemisphere $K$ supporting $C$ at all points of the arc $ef$. 
Thus by the assumption that ${\rm diam} (C) \leq \frac{\pi}{2}$ we obtain ${\rm width}_K (C) \leq \frac{\pi}{2}$ for our particular $K$.
Hence we may apply Lemma \ref{max} taking this $K/K^*$ in the part of $Q$ there. 
We obtain $|cd| \leq \max \{|ce|, |cf|\}$.

If $c \in E(C)$, from $e, f \in E(C)$ we conclude that $|cd| \leq \diam (E(C))$. 
If $c \notin E(C)$, from $c \in \bd(C)$ we see that there are $g, h \in E(C)$ such that $c \in gh$. 
Similarly to the preceding paragraph we show that $|ec| \leq \max \{|eg|, |eh|\}$ and $|fc| \leq \max \{|fg|, |fh|\}$. 
By these two inequalities and by the preceding paragraph we get $|cd| \leq \max \{|eg|, |eh|, |fg|, |fh|\} \leq \diam (E(C))$, which ends the proof.
\end{proof}

The assumption that ${\rm diam} (C) \leq \frac{\pi}{2}$ is substantial in Lemma \ref{diamE}, as it follows from the example of a regular triangle of any diameter greater than $\frac{\pi}{2}$. 
The weaker assumption that $\Delta(C) \leq \frac{\pi}{2}$ is not sufficient, 
which we see taking in the part of $C$ any isosceles triangle $T$ with $\Delta(T) \leq \frac{\pi}{2}$ and the arms longer than $\frac{\pi}{2}$ (so with the base shorter than $\frac{\pi}{2}$). 
The diameter of $T$ equals to the distance between the midpoint of the base and the opposite vertex of $T$.
Hence ${\rm diam} (T)$ is over the length of each of the sides.

\medskip
\section{Diameter of reduced spherical bodies} 

The following theorem is analogous to the first part of Theorem 9 from \cite{L1} and confirms the conjecture from \cite{L3}, p. 214.
By the way, the much weaker estimate $2 \arctan \left(\sqrt 2 \tan \frac{\Delta(R)}{2}\right)$ results from Theorem 2 in \cite{Mu}.

\begin{thm} \label{ineq} 
For every reduced spherical body $R \subset S^2$ with $\Delta (R) < \frac{\pi}{2}$ we have ${\rm diam}(R) \leq {\rm arc cos} (\cos^2 \Delta (R))$.  
This value is attained if and only if $R$ is the quarter of disk of radius $\Delta(R)$.
If $\Delta (R) \geq \frac{\pi}{2}$, then ${\rm diam}(R) = \Delta (R)$.
\end{thm}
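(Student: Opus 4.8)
The plan is to dispose of the range $\Delta(R)\ge\frac{\pi}{2}$ at once and then concentrate all the work on the inequality for $\Delta(R)<\frac{\pi}{2}$. For $\Delta(R)\ge\frac{\pi}{2}$ I would argue that a reduced body this thick must be of constant width (no strictly thinner lune is available, so $\mathrm{width}_K(R)$ cannot vary), and then combine two facts: the general inequality $\Delta(R)\le\diam(R)$, which is immediate from Theorem~3 of \cite{L2}, with the fact that the diameter of a body of constant width equals that width (\cite{LaMu2}); together these give $\diam(R)=\Delta(R)$. From now on let $\Delta:=\Delta(R)<\frac{\pi}{2}$, so that the target value $B:=\arccos(\cos^2\Delta)$ is itself $<\frac{\pi}{2}$. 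I would realise the diameter by two extreme points $c,d$ with $|cd|=\diam(R)$; this uses Lemma~\ref{diamE}, whose hypothesis $\diam(R)\le\frac{\pi}{2}$ will in fact fall out of the main estimate below (I will order the argument so that this is not circular).

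The decisive geometric input is the local structure of reduced bodies at an extreme point: $c$ is the centre of one bounding semicircle of some lune $L_c=G_c\cap H_c\supseteq R$ of thickness exactly $\Delta$, and if $c'$ denotes the centre of the other semicircle, then $|cc'|=\Delta$ and, by minimality of the lune, $c'\in R$. I would use $c'$ as an apex. Because $R\subseteq H_c$ and the arc $c'c$ runs along the inward normal of $\partial H_c$ at $c'$, every $x\in R$ satisfies $\angle xc'c\le\frac{\pi}{2}$; in particular $\angle dc'c\le\frac{\pi}{2}$. The whole estimate then rests on the spherical law of cosines in the triangle $cc'd$,
\[\cos|cd|=\cos|cc'|\cos|c'd|+\sin|cc'|\sin|c'd|\cos\angle cc'd,\]
whose last summand is non-negative; hence $\cos\diam(R)\ge\cos\Delta\cos|c'd|$, and the desired $\cos\diam(R)\ge\cos^2\Delta$ follows the moment we know $|c'd|\le\Delta$. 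Note that this same computation, applied to an arbitrary $x\in R$ in place of $d$, bounds $|cx|$ by $B<\frac{\pi}{2}$ once the analogue $|c'x|\le\Delta$ is available, which is exactly what furnishes the a priori diameter bound used for Lemma~\ref{diamE}.

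The crux, and the step I expect to be the main obstacle, is precisely the inequality $|c'd|\le\Delta$, equivalently $d\in D(c',\Delta)$ where $D(c',\Delta)$ is the disk of radius $\Delta$ about $c'$. Here the lune by itself is useless, since $L_c$ already reaches distance $\frac{\pi}{2}$ from $c'$ at its corners; one must invoke reducedness in full. I would try to prove the stronger statement $R\subseteq D(c',\Delta)$ by a cut-and-contradict argument: if some point of $R$ lay farther than $\Delta$ from $c'$, then $Z:=R\cap D(c',\Delta)$ would be a proper convex subset of $R$, and I would show $\Delta(Z)\ge\Delta(R)$, contradicting the definition of reducedness. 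The delicate point is to control how the thickness behaves under this truncation, and this is where Lemma~\ref{max} should do the real work, since it keeps the distances from the relevant apex to points of the bounding semicircles under control and so prevents the truncation from thinning $Z$.

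Finally, for the equality statement I would trace back the chain of inequalities: $\cos\diam(R)=\cos^2\Delta$ can hold only if simultaneously $|c'd|=\Delta$ and $\cos\angle cc'd=0$, the latter because $0<\Delta<\frac{\pi}{2}$ forces $\sin\Delta\ne0$. Thus $c$ and $d$ lie at distance $\Delta$ from $c'$ with a right angle between the arcs $c'c$ and $c'd$, and the two hemispheres supporting $R$ through $c'$ squeeze $R$ into the corresponding quarter of the disk $D(c',\Delta)$; convexity together with $\diam(R)=|cd|$ then forces $R$ to be exactly that quarter of a disk of radius $\Delta$. The converse is the short computation already noted: for the quarter of a disk of radius $\Delta$ the law of cosines with the right angle at its centre gives $\cos|cd|=\cos^2\Delta$, so the bound is attained.
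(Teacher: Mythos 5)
Your framework is sound in its easy parts: the reduction to extreme points, the angle bound $\angle xc'c\le\frac{\pi}{2}$ for all $x\in R$ (correct, since $R\subseteq H_c$ and the arc $c'c$ is the inward normal arc of $\partial H_c$ at $c'$), the law-of-cosines step $\cos|cd|\ge\cos\Delta\cos|c'd|$, and the case $\Delta(R)\ge\frac{\pi}{2}$ via constant width (citable from \cite{LaMu2}, Theorem 4.3, though ``diameter equals width'' in the range $\ge\frac{\pi}{2}$ is itself assembled in the paper from Theorem 5.2 of \cite{LaMu2} and Lemma 3 of \cite{L2} rather than quoted as one fact). But the inequality $|c'd|\le\Delta$ --- which you correctly identify as the crux --- is never proved, and it is where the entire content of the theorem sits. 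Your cut-and-contradict sketch requires showing $\Delta\bigl(R\cap D(c',\Delta)\bigr)\ge\Delta(R)$, and nothing you invoke supports this: Lemma \ref{max} compares distances from one point of a lune to points of a single bounding semicircle, whereas the thickness of the truncated body $Z$ is a minimum of lune thicknesses over \emph{all} hemispheres supporting $Z$, most of which have nothing to do with $L_c$ or with the small circle bounding $D(c',\Delta)$; there is no mechanism in the lemma to ``prevent the truncation from thinning $Z$.'' Worse, the containment $R\subseteq D(c',\Delta)$ you aim for is strictly \emph{stronger} than the theorem itself: since $c'\in R$, the theorem only yields $|c'x|\le\arccos(\cos^2\Delta)$ for $x\in R$, so the containment cannot even be recovered a posteriori, and you give no independent evidence that it holds for general reduced bodies. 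The equality case and your ``non-circular'' route to the hypothesis $\diam(R)\le\frac{\pi}{2}$ of Lemma \ref{diamE} both lean on this same unproven claim (the paper has that prerequisite available from Proposition 3.5 of \cite{LaMu2}).

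The paper's proof shows how to avoid needing any such covering statement: it uses the minimal lunes at \emph{both} endpoints $e_1,e_2$ of the diameter (Theorem 4 of \cite{L2}), whose thickness arcs $e_1b_1$ and $e_2b_2$, each of length $\Delta$, cross at a point $g$ of the quadrangle $L_1\cap L_2$. Then $g\in e_2b_2$ gives $|ge_2|\le\Delta$; the foot $f$ of the perpendicular from $e_2$ to the arc $e_1b_1$ satisfies $|fe_2|\le|ge_2|\le\Delta$ (leg versus hypotenuse in the right triangle $gfe_2$) and $|fe_1|\le|e_1b_1|=\Delta$; and the right-triangle identity $\cos|e_1e_2|=\cos|fe_1|\,\cos|fe_2|\ge\cos^2\Delta(R)$ concludes. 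The two-lune symmetry is precisely what replaces your disk containment: each leg at the foot $f$ is bounded by $\Delta$ using the \emph{other} point's thickness arc, a purely local argument. Equality then forces $g=f=b_1=b_2$, and Proposition 3.2 of \cite{LaMu2} identifies $R$ as the quarter of disk, whereas your equality analysis again presupposes the missing containment. Unless you can actually prove $R\subseteq D(c',\Delta)$ --- an assertion that may well be true but is at least as deep as the theorem --- your single-lune approach does not close.
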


\begin{proof}
Assume that $\Delta (R) < \frac{\pi}{2}$. 
In order to show the first statement, by Lemma \ref{diamE} it is sufficient to show that the distance between any two different points $e_1, e_2$ of $E(R)$ is at most ${\rm arc cos} (\cos^2 \Delta (R))$.
Since $R$ is reduced, according to the statement of Theorem 4 in \cite{L2} there exist lunes $L_j \supset R$, where $j \in \{1,2\}$,
of thickness $\Delta(R)$ with $e_j$ as the center of one of the two semicircles bounding $L_j$ (see Figure). 
Denote by $b_j$ the center of the other semicircle bounding $L_j$.

If $e_1=b_2$ or $e_2=b_1$, then $|e_1e_2| = \Delta (R)$, which by $\Delta (R) \in (0, \frac{\pi}{2})$ is at most  $\arccos(\cos^2\Delta(R))$.
Otherwise $L_1 \cap L_2$ is a non-degenerate spherical quadrangle with points $e_1$, $b_2$, $b_1$, $e_2$ in its consecutive sides. 
Therefore, since $e_1\neq e_2$, arcs $e_1b_1$ and $e_2b_2$ intersect at exactly one point. 
Denote it by $g$. 
Observe that it may happen $b_1=b_2=g$.

Let $F$ be the great circle orthogonal to the great circle containing $e_1b_1$ and passing through $e_2$.
Since $e_2 \in  L_1$, we see that $F$ intersects $e_1b_1$.
Let $f$

\begin{center}

\includegraphics[width=3.45in]{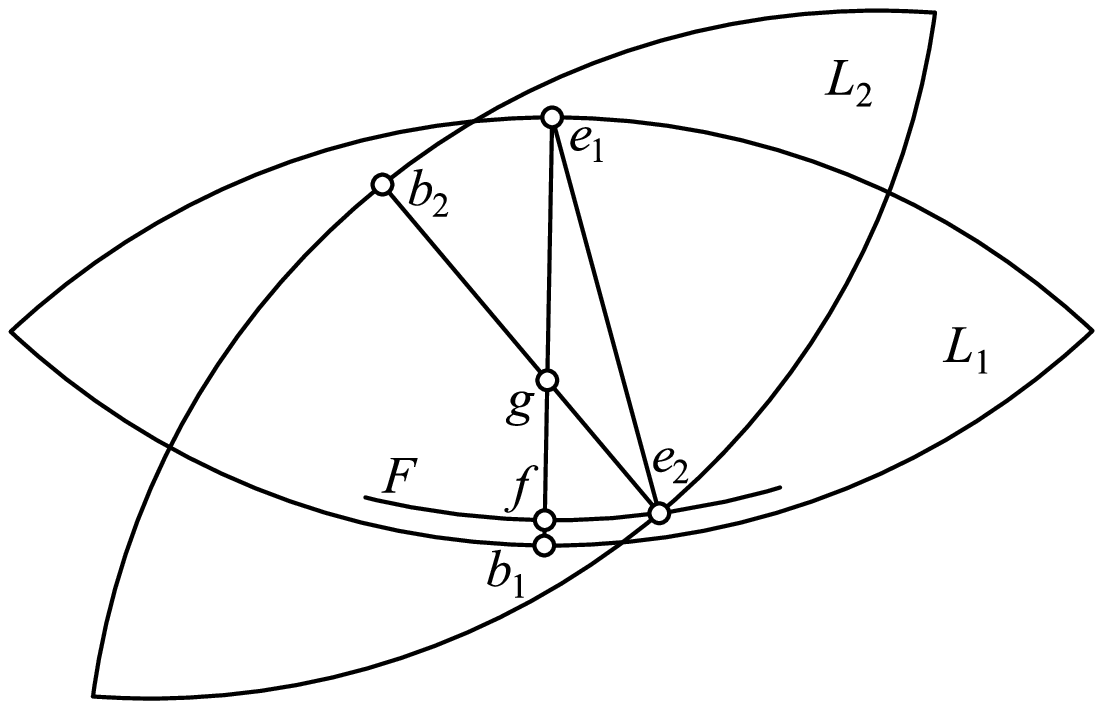} \\  

\medskip
{FIGURE. Illustration to the proof of Theorem \ref{ineq}.}

\end{center}

\noindent
be the intersection point of them.
Note that we do not exclude the case $f=b_1$.
From $|e_2b_2| = \Delta (R)$ we see that $|ge_2| \leq \Delta (R) < \frac{\pi}{2}$. 
Thus from the right spherical triangle $gfe_2$ we conclude that $|fe_2| \leq \Delta (R)$. 
Moreover, from $|e_1b_1| = \Delta (R)$ and $f \in e_1b_1$ we obtain $|fe_1| \leq \Delta (R)$.  
Consequently, from the formula $\cos k = \cos l_1 \cos l_2$ for the right spherical triangle with hypotenuse $k$ and legs $l_1, l_2$ applied to the triangle $e_1fe_2$ (again see Figure) we obtain $|e_1e_2| \leq {\rm arc cos} (\cos^2 \Delta (R))$. 
 
Observe that thanks to $\Delta (R)< \frac{\pi}{2}$, the shown inequality becomes the equality only if  $g=f=b_1=b_2$.
In this case, by Proposition 3.2 of \cite{LaMu2}, our body $R$ is a quarter of disk of radius $\Delta(R)$.
 
Finally, we show the last statement of the theorem.
Assume that $\Delta (R) \geq \frac{\pi}{2}$.
By Theorem 4.3 of \cite{LaMu2}, the body $R$ is of constant width $\Delta (R)$.

There is an arc $pq \subset R$ of length equal to ${\rm diam} (R)$.  
Clearly, $p\in \bd (R)$.
Take a lune $L$ from Theorem 5.2 of \cite{LaMu2} such that $p$ is the center of a semicircle bounding $L$.
Denote by $s$ the center of the other semicircle $S$ bounding $L$.
By the third part of Lemma 3 in
 \cite{L2}, we have $|px| < |ps|$ for every $x \in S$ different from $s$.
Hence $|px| \leq |ps|$ for every $x \in S$.
So also $|pz| \leq |ps|$ for every $z \in L$. 
In particular, $|pq|\leq |ps|$.
Since ${\rm diam} (R) = |pq|$ and $\Delta(R) = \Delta(L) = |ps|$, we obtain ${\rm diam} (R) \le \Delta(R)$.

Let us show the opposite inequality.

If ${\rm diam} (R) \leq \frac{\pi}{2}$, by Theorem 3 of \cite{L2} we get $\Delta (R) = {\rm diam} (R)$.
If ${\rm diam} (R) > \frac{\pi}{2}$, then by Proposition 1 of \cite{L2} we get $\Delta (R) \leq {\rm diam} (W)$.
Consequently, always we have $\Delta (R) \le {\rm diam} (R)$.

From the inequalities obtained in the two preceding paragraphs we get the equality ${\rm diam} (R) = \Delta (R)$.
Hence the last sentence of the theorem is proved.
\end{proof}

Proposition 3.5 of \cite{LaMu2} implies that {\it for every reduced spherical body $R$ with $\Delta (R) \leq \frac{\pi}{2}$ on $S^2$ we have ${\rm diam} (R) \leq \frac{\pi}{2}$}. 
Here is a more precise form of this statement.

\begin{pro} \label{precise}
Let $R \subset S^2$ be a reduced body.
Then ${\rm diam} (R) < {\pi \over 2}$ if and only if $\Delta (R) < {\pi \over 2}$. 
Moreover, ${\rm diam} (R) = {\pi \over 2}$ if and only if $\Delta (R) = {\pi \over 2}$. 
\end{pro}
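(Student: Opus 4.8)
The plan is to read the behaviour of ${\rm diam}(R)$ directly off Theorem~\ref{ineq}, which already pins down ${\rm diam}(R)$ in every regime of $\Delta(R)$ relative to $\frac{\pi}{2}$, and then to repackage those one-directional facts as the two claimed equivalences. Since every reduced body satisfies exactly one of $\Delta(R) < \frac{\pi}{2}$, $\Delta(R) = \frac{\pi}{2}$, $\Delta(R) > \frac{\pi}{2}$, I would run a trichotomy argument: establish where each of the three cases sends ${\rm diam}(R)$, and then invert.

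First I would dispose of the case $\Delta(R) < \frac{\pi}{2}$. The first statement of Theorem~\ref{ineq} gives ${\rm diam}(R) \le \arccos(\cos^2 \Delta(R))$, and the elementary observation that $\cos^2 \Delta(R) > 0$ for $\Delta(R) \in (0, \frac{\pi}{2})$ yields $\arccos(\cos^2 \Delta(R)) < \arccos 0 = \frac{\pi}{2}$; hence ${\rm diam}(R) < \frac{\pi}{2}$. The two remaining cases I would handle together through the last statement of Theorem~\ref{ineq}: for $\Delta(R) \ge \frac{\pi}{2}$ it asserts ${\rm diam}(R) = \Delta(R)$, so $\Delta(R) = \frac{\pi}{2}$ forces ${\rm diam}(R) = \frac{\pi}{2}$ and $\Delta(R) > \frac{\pi}{2}$ forces ${\rm diam}(R) > \frac{\pi}{2}$.

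Finally I would assemble the equivalences. The three implications show that ${\rm diam}(R)$ is, respectively, below, equal to, or above $\frac{\pi}{2}$ precisely according to whether $\Delta(R)$ is below, equal to, or above $\frac{\pi}{2}$; because the three hypotheses on $\Delta(R)$ are mutually exclusive and exhaustive, each forward implication is reversible by contraposition. For instance, ${\rm diam}(R) < \frac{\pi}{2}$ rules out both $\Delta(R) = \frac{\pi}{2}$ and $\Delta(R) > \frac{\pi}{2}$, leaving $\Delta(R) < \frac{\pi}{2}$, and similarly ${\rm diam}(R) = \frac{\pi}{2}$ forces $\Delta(R) = \frac{\pi}{2}$. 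I expect no genuine obstacle here, as Theorem~\ref{ineq} carries all the geometric content; the single point deserving care is the strictness in the first case, namely that the bound $\arccos(\cos^2 \Delta(R))$ sits strictly below $\frac{\pi}{2}$. This strictness, resting only on $\cos^2 \Delta(R) > 0$, is exactly what distinguishes the strict first equivalence from the equality in the second.
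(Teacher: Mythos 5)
Your proposal is correct, and its forward implications coincide with the paper's: for $\Delta(R) < \frac{\pi}{2}$ you bound $\diam(R) \le \arccos(\cos^2\Delta(R)) < \arccos 0 = \frac{\pi}{2}$ using $\cos^2\Delta(R) > 0$, while the paper phrases the same computation via monotonicity of $f(x) = \arccos(\cos^2 x)$ together with $f(\frac{\pi}{2}) = \frac{\pi}{2}$; and for $\Delta(R) \ge \frac{\pi}{2}$ both you and the paper invoke the last statement of Theorem \ref{ineq}. Where you genuinely diverge is in the converse directions. The paper imports the external inequality $\Delta(C) \le \diam(C)$, valid for \emph{every} spherical convex body (from Theorem 3 and Proposition 1 of \cite{L2}), and applies it twice: once to get $\diam(R) < \frac{\pi}{2} \Rightarrow \Delta(R) < \frac{\pi}{2}$ directly, and once to exclude $\Delta(R) > \frac{\pi}{2}$ when $\diam(R) = \frac{\pi}{2}$. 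You instead close the loop by pure trichotomy: the three exhaustive hypotheses $\Delta(R) < \frac{\pi}{2}$, $= \frac{\pi}{2}$, $> \frac{\pi}{2}$ send $\diam(R)$ into three mutually exclusive regimes, so each implication inverts without any further geometric input. This makes your argument entirely self-contained modulo Theorem \ref{ineq} — no appeal to \cite{L2} is needed — at the price of leaning on the $\Delta(R) > \frac{\pi}{2}$ case of that theorem's last statement (a case that may well be vacuous for reduced bodies, but the implication is harmless either way), whereas the paper's route keeps each converse a one-step consequence of a general inequality that holds for all convex bodies, not only reduced ones. Both arguments are valid; yours is the tidier deduction from Theorem \ref{ineq} alone, the paper's the more modular one.
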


\begin{proof}
We start with proving the first statement of our proposition.
The function $f(x) = {\rm arc \, cos} (\cos^2 x)$ is increasing in the interval $[0, \frac{\pi}{2}]$ as a composition of the decreasing functions $\arccos x$ and $\cos^2x$. 
From $f({\pi \over 2}) = {\pi \over 2}$ we conclude that in the interval $[0, {\pi \over 2})$ the function $f(x)$ accepts only the values below ${\pi \over 2}$.
Thus by Theorem \ref{ineq}, if $\Delta (R) < {\pi \over 2}$, then ${\rm diam} (R) < {\pi \over 2}$.
The opposite implication results 
from the inequality $\Delta (C) \leq {\rm diam} (C)$ for every spherical convex body $C$, which in turn follows from Theorem 3 and Proposition 1 of \cite{L2}.

Let us show the second part of our proposition.

Assume that $\diam (R) = \frac{\pi}{2}$. 
The inequality $\Delta (R) < \frac{\pi}{2}$ is impossible, by the first statement of our proposition.
Also the inequality $\Delta (R) > \frac{\pi}{2}$ is impossible, because by $\Delta (R) \leq \diam (R)$ (see Proposition 1 of \cite{L2}), it would imply $\diam (R) > \frac{\pi}{2}$ in contradiction to the assumption at the beginning of this paragraph. 
So $\Delta (R) = \frac{\pi}{2}$.

Now assume that $\Delta(R) = \frac{\pi}{2}$.
By the second statement of Theorem \ref{ineq} we get $\diam(R) = \frac{\pi}{2}$.
\end{proof}


\begin{thebibliography}{15}

\bibitem{BS}
BESAU F., SCHUSTER F., Binary operations in spherical convex geometry, \textit{Indiana Univ. Math. J.} \textbf{65} (2016), no. 4, 1263--1288.

\bibitem {DGK} 
DANZER L., GR\"UNBAUM B., KLEE V., Helly's theorem and its relatives, \textit{Proc. of Symp. in Pure Math.} vol. VII, Convexity, 1963, pp. 99--180.  

\bibitem{FIN}
 FERREIRA O. P.,  IUSEM A. N., N\'EMETH  S. Z., Projections onto convex sets on the sphere, \textit{J. Global Optim.} {\bf 57} (2013), 663--676.   
 
 \bibitem{GHS}
GAO F., HUG D., SCHNEIDER R., Intrinsic volumes and polar sets in spherical space. \textit{Math. Notae}, \textbf{41} (2003), 159–-176.

\bibitem{Ha}  
HADWIGER H., Kleine Studie zur kombinatorischen Geometrie der Sph\"are, {\it Nagoya Math. J.} {\bf 8} (1955), 45--48.

\bibitem{HN} 
HAN H., NISHIMURA T., Self-dual Wulff shapes and spherical convex bodies of constant width $\pi/2$, {\it J. Math. Soc. Japan} {\bf 69} (2017),  1475--1484. 

\bibitem{L1}   
LASSAK M., Reduced convex bodies in the plane, {\it Israel J. Math.} {\bf 70} (1990), 365--379. 

\bibitem{L2}  
LASSAK M., Width of spherical convex bodies, {\it Aequationes Math.} {\bf 89} (2015), 555--567. 

\bibitem{L3}  
LASSAK M., Reduced spherical polygons, {\it Colloq. Math.} {\bf 138} (2015), 205--216. 

\bibitem{LM}  
LASSAK M.,  MARTINI H., Reduced convex bodies in finite-dimensional normed spaces -- a survey, \textit{Results Math.} \textbf{66} (2014), No. 3-4, 405--426.

\bibitem{LaMu1}  
LASSAK M.,  MUSIELAK M., Reduced spherical convex bodies, {\it Bull. Pol. Ac. Math.} {\bf 66} (2018), 87--97. 

\bibitem{LaMu2} 
LASSAK M.,  MUSIELAK M., Spherical bodies of constant width, {\it Aequationes Math.} {\bf 92} (2018), 627--640.

\bibitem{Mu}  
MUSIELAK M., Covering a reduced spherical body by a disk, {\it Ukr. Math. J.}, to appear (see also arXiv:1806.04246).

\bibitem {VB} 
VAN BRUMMELEN G., {\it Heavenly mathematics. The forgotten art of spherical trigonometry.} Princeton University Press (Princeton, 2013).

\end{thebibliography}
\end{document}